\newtheorem{theorem}{Theorem}
\newtheorem{lemma}{Lemma}
\newtheorem{definition}{Definition}
 \newtheorem{coro}{Corollary}
  \newtheorem{Main Results}[theorem]{MainResults}
 \newtheorem{remark}{Remark}
  \newtheorem*{assumption}{Assumption}
\begin{document}

\title{Governing equations for Probability densities of stochastic differential equations with discrete time delays}
%\bigskip
 \author{ {\it{Yayun Zheng\textsuperscript{1}  and Xu Sun\textsuperscript{1,2 ,\footnote{Corresponding author: xsun15@gmail.com}}}}\\ \\
{\textsuperscript{1}School of Mathematics and Statistics,}\\{Huazhong University of Science and Technology,
  Wuhan 430074, Hubei, China} \\
   \\{\textsuperscript{2}Department of Applied Mathematics,}\\ {Illinois Institute of Technology,
  Chicago, IL 60616, USA}\\ \\
  }
%\bigskip

\date{Sep. 5th, 2016}
\maketitle

%\newpage
\begin{abstract}
The time evolution of probability densities for solutions to  stochastic differential equations (SDEs) without delay is usually described by   Fokker-Planck equations, which require  the adjoint of the infinitesimal generator for  the solutions.  However, Fokker-Planck equations  do not exist for stochastic delay differential equations (SDDEs) because  the solutions to SDDEs are not Markov processes and  have no corresponding infinitesimal generators. In this paper,  we address the open question of finding the governing equations  for probability densities of SDDEs with discrete time delays.   The governing equation is given in a simple form that facilitates theoretical analysis and numerical computation.  An illustrative example is presented to verify the proposed governing equations.

\textbf{Keywords}: stochastic  differential equations, Brownian motions, probability density, discrete time delay,  stochastic delay  differential equations.

\end{abstract}

\maketitle

\pagestyle{plain}

\section{Introduction}  \label{intro}
%%%%%%%%%%%%%%%%%%%%%%%%%%%%%%%%%%%%%%%%%%%%%%%%%%%%%%%%%%%%%%%%%

We shall consider the following stochastic delay differential equation (SDDE),
\begin{align}\label{s1_1}
\begin{cases}
{\rm d}X(t) = f(X(t), X(t-\tau)) {\rm d}t + g(X(t), X(t-\tau)){\rm d}B(t), \quad \text{for} \quad t>0,\\
X(t)=\gamma(-t), \quad \text{for} \quad -\tau \le t\le 0,
\end{cases}
\end{align}
where $X(t)$ is a $\mathbb{R}^d$-valued stochastic process, $B(t)$ is a $\mathbb{R}^n$-valued Brownian motion defined on some probability space $(\Omega, \mathcal{F}, \mathbb{P})$, $f: \mathbb{R}^{2\times d}\to \mathbb{R}^d$, $g: \mathbb{R}^{2\times d}\to \mathbb{R}^{d\times n}$ and $\gamma : [0, \tau]\to \mathbb{R}^d$.

SDDE (\ref{s1_1}) have  been extensively used in many fields such as biology \cite{BeuterBelairLabrie1993}, mechanical engineering \cite{CrawfordVerriestLieuwen2013}, control systems \cite{GuZhu2014}, and so on.

Sufficient conditions for  existence and uniqueness  of the solution $X(t)$ defined by (\ref{s1_1}) have been established under global Lipschitz or under local Lipshitz and linear growth conditions in the general context where the coefficients of the equation depend on the past path of the solution,  see \cite{Mao1997,Mohammed1984}  among others. Existence and regularity of the densities in the general context has been studied by the method of Malliavin calculus in \cite{KusuokaStroock1982} under some H\"{o}rmander conditions. A more general sufficient condition  is presented in \cite{BellMohammed1991} for the SDDE (\ref{s1_1}) without drift terms (i.e., $f=0$).

Governing equations for  probability densities of solutions to SDEs without delay (e.g., $f(X(t), X(t-\tau))=f(X(t))$ and $g(X(t), X(t-\tau))=g(X(t))$ in (\ref{s1_1})) are well known as Fokker-Planck equations, which have been widely used to quantify the evolution and propagation of the uncertainty in stochastic dynamical systems \cite{Duan2015,LinCai2004}. Fokker-Plank equations require the adjoint of  infinitesimal generators  for solutions  to   SDEs. However, due to its non-Markov property,  SDDE (\ref{s1_1}) has no  infinitesimal generator and thus has no corresponding Fokker-Planck equation. It is still an open question on how to obtain  governing equations for the density associated with SDDE (\ref{s1_1}). 

Note that governing equations  are often necessary to devise   analytic or numerical methods (other than Monte Carlo) to solve  the densities. The unavailability of the governing equations for the densities poses as a significant obstacle on the application of SDDE (\ref{s1_1}).

The main objective of this paper is to derive an governing equation for the probability density of the solution $X(t)$ defined by SDDE (\ref{s1_1}). The sections of this paper are organized as follows. In section 2, the main result  and its corollary are presented. Proof of the main result  is presented in section 3. In section 4, the main result is verified by an illustrative example.

\section{Main Results}

To study SDDE (\ref{s1_1}), we associate it with the following stochastic differential equation (SDE),
\begin{align}\label{s1_2}
\begin{cases}
{\rm d}X_{1}(t' )= f(X_1(t^\prime ),\gamma(\tau-t' )){\rm d}t' + g(X_1(t' ),\gamma(\tau-t' )){\rm d}B_1(t' ),\\
{\rm d}X_{2}(t' )= f(X_2(t' ),X_1(t' )){\rm d}t'  + g(X_2(t' ),X_1(t' )){\rm d}B_2(t'),\\
\vdots\\
{\rm d}X_{k}(t' )= f(X_{k}(t' ),X_{k-1}(t' )){\rm d}t'  + g(X_{k}(t' ),X_{k-1}(t^\prime)){\rm d}B_{k}(t'), 
\end{cases} \quad\text{for}\quad t' \in [0, \tau],
\end{align}
where $k\in \mathbb{N}$,  $ {X} _i(t')$ ($i=1,2, \cdots,k$) is a $\mathbb{R}^d$-valued stochastic process, $B_i(t')$($i=1, 2, \cdots, k$) is a $\mathbb{R}^n$-valued Brownian motion, which is related to $B(t)$ in (\ref{s1_1}) by $B_i(t')=B(t'+(i-1)\tau)-B_i((i-1)\tau)$. It is obvious  that $B_i(t')$ ($i=1, 2, \cdots,k)$ are  independent of each other in the probability space $(\Omega, \mathcal{F}, \mathbb{P})$.

In this paper, (\ref{s1_2}) will be investigated under three different types of  constraints, as listed below.   To simplify notation, we introduce  $\widetilde{\mathbf{X}}_k(t^\prime)$, which is a vector  in $\mathbb{R}^{k\times d}$ defined as the concatenation of the $k$ vectors $X_1(t^\prime),X_2(t^\prime),\cdots,X_k(t^\prime)$, i.e., $\widetilde{\mathbf{X}}_k(t^\prime) = (X_1 (t^\prime), X_2(t^\prime),\cdots,X_k (t^\prime)) $. 
\\ \\
 (C1) Initial condition\\ 
In this type of condition, the initial value for (\ref{s1_2})  is prescribed, i.e.,  
\begin{align}\label{s1_3}
\widetilde{\mathbf{X}}_k(0) =  v_0,
\end{align}
 where $v_0$ is a constant in $\mathbb{R}^{k\times d}$.\\ \\
(C2) Bridge condition\\ 
 In this type of condition, both the initial and  final values in the time interval $[0, \tau]$ are prescribed, i.e.,
\begin{align}\label{s1_4}
\widetilde{\mathbf{X}}_k(0) =v_0, \quad\quad \widetilde{\mathbf{X}}_k(\tau)  =v_1.
\end{align}
 where $v_0$ and $v_1$ are constants  in $\mathbb{R}^{k\times d}$.\\ \\
(C3) Continuous condition\\ 
In this type of condition, the initial value of $X_1(t')$ is prescribed, and  the initial value of $X_i(t')$ is set to be equal to the final value of $X_{i-1}(t')$ for $i=2,3, \cdots,k$. i.e., 
\begin{align}\label{s1_5}
X_1(0) = x_0\quad \text{and}\quad X_{i}(0) = X_{i-1}(\tau) \quad\text{for}\quad i=2, 3, \cdots,k,
\end{align}
where $x_0$ is a constant in $\mathbb{R}^d$ .

The following assumption   is used throughout this paper.
\begin{assumption}[H1]
Suppose $\forall v_0\in \mathbb{R}^{k\times d}$, the SDE defined by (\ref{s1_2})  and (\ref{s1_3})   have  unique strong solution, and the probability density for this solution exists and is strictly positive.
\end{assumption}

Sufficient conditions for the existence and uniqueness for the solution to SDE (\ref{s1_2}) and (\ref{s1_3}) have been thoroughly studied and are well known. To ensure the existence and uniqueness, functions $f$ and $g$ are usually required to satisfy some Lipschitz or H\"{o}lder continuous conditions.  Readers are referred to the monographs \cite{Protter2004,Klebaner2005} among others for more discussion on this topic.

The existence and regularity of the density for  the solution to   SDE (\ref{s1_2}) and (\ref{s1_3}) have   been  well studied. The coefficient $g$  often requires to satisfy some ellipticity conditions or H\"ormander conditions to ensure the existence and regularity of the density,  see \cite{Nualart2006,BogachevKrylovRocknerShaposhnikov2015} and the reference therein for more details. The strictly positive property of the densities for a general class of SDEs can be concluded from the heat kernel estimations \cite{Davies1989,Aronson1968,NorrisStroock1991}.  A more general sufficient condition for strictive positiveness of  densities is recently presented in \cite{BogachevRocknerShaposhnikov2009}.

\begin{definition}\label{def_1}
For $k\in \mathbb{N}$, define $\mathcal{Q}_{k}(\cdot;\cdot\big| \cdot;\cdot): \mathbb{R}^{k\times d }\times [0,\tau] \times \mathbb{R}^{k\times d} \times [0,\tau]\to [0, \infty)$ such that  $\mathcal{Q}_{k }(\cdot; t^\prime\big| v;s): \mathbb{R}^{k\times d } \to [0, \infty)$ ($\tau\ge t^\prime> s\ge 0$) is the  probability density for the solution $\widetilde{\mathbf{X}}_k(t^\prime)$ as defined in  SDEs (\ref{s1_2}) with initial value $
\widetilde{\mathbf{X}}_k(s)=v$.
\end{definition} 

\begin{remark}It is naturally true that
\begin{align}\label{s1_6}
  \lim\limits_{t^\prime\to s}\mathcal{Q}_{k }(u; t^\prime\big| v;s) = \delta(u-v).
\end{align}
\end{remark}
Note that three different notations are used in this paper to represent probability densities: $\mathcal {P}_\mathcal{A}$, $\mathcal{Q}_k$, and $p$. \\ \\ 
(i)$\mathcal P_\mathcal{A}$: $\mathcal{P}_\mathcal{A}$ is reserved to denote the density  for the solution $X(t)$ defined in (\ref{s1_1}). Here the subscript $\mathcal{A}$ is used to indicate the initial condition $X(s)=\gamma(-s)$ for $s\in [-\tau,0]$.  For example, $\mathcal{P}_\mathcal {A}(x,t)$ represents the density for $X(t)$ at $X(t)=x$, $\mathcal{P}_\mathcal {A}(x,3\tau; y,4\tau  \big| m, \tau; z, 2\tau )$ represents the conditional   density of $X(3\tau)$ and $X(4\tau)$ at $X(3\tau)=x$ and $X(4\tau)=y$  given $X( \tau)=m$ and $X(2\tau)=z$.\\ \\
(ii) $\mathcal {Q}_k$: As given in Definition \ref{def_1}, $\mathcal{Q}_k$ is reserved to denote the transitional density of the $\mathbb{R}^{k\times d}$-valued solution $\widetilde{\mathbf{X}}_k$ defined by SDE (\ref{s1_2}). For example,  $\mathcal{Q}_2 ( x, y;  t^\prime\big| m, z; s)$ with $0\le s< t^\prime\le\tau$ represents the density of $( X_1(t^\prime),X_2(t^\prime)) $  at $X_1(t^\prime)=x$ and $X_2(t^\prime)=y$ given $X_1(s)=m$ and $X_2(s)=z$. \\ \\
(iii) $p$: $p$ is used in  general cases including the cases where $\mathcal{P}_{\mathcal A}$ and $\mathcal{Q}_k$ do not apply. For example, $p(X=x; Y=y)$ represents the   density of the $(X,Y)$ at $X=x$ and $Y=y$; $p(X=x; Y=y\big| Z=z)$ represents the   density of $(X,Y)$ at $X=x$ and $Y=y$ given $Z=z$. Note that $\mathcal P_\mathcal{A}$ and  $\mathcal {Q}_k$ can also be expressed in terms of $p$, for instance,
\begin{align}\label{s1_7}
\mathcal{P}_\mathcal {A}(x,t)=p(X(t)=x\big| X(0)=\gamma_0),
\end{align}
\begin{align}\label{s1_8}
\mathcal{P}_\mathcal {A}(x,3\tau; y,4\tau  \big| m, \tau; z, 2\tau )=p(X(3\tau)=x; X(4\tau)=y\big| X(0)= \gamma_0,  X(\tau)=m; X(2\tau)=z),
\end{align}
\begin{align}\label{s1_9}
\mathcal{Q}_2 ( x, y; t^\prime\big| m, z;s)
&=p(X_1(t^\prime)=x; X_2(t^\prime)=y\big| X_1(s)=m; X_2(s)=z)\nonumber\\
&=p(\widetilde{\mathbf{X}}_2(t^\prime)=(x,y) \big|\widetilde{\mathbf{X}}_2(s)=(m,z) ).
\end{align}
Note that  $\gamma_0$ in (\ref{s1_7}) and (\ref{s1_8}) is a shorthand notation for $\gamma(0)$.

We are now ready to present the main result.
\begin{theorem}[Main result]\label{theorem1}
Suppose that Assumption  $H1$ holds. Then   $\forall t >0$,  the probability density function $\mathcal{P}_\mathcal{A}(x,t)$  for the solution $X(t)$  defined by (\ref{s1_1}) exists. Moreover, $\forall x\in \mathbb{R}^d$, the following statements are true: \\
(i)   For $t\in (0,\tau]$, 
\begin{align}\label{s1_10}
{\mathcal P}_{\mathcal{A}}(x,t) =  
 \mathcal{Q}_1 (x; \tau\big| \gamma_0; 0),
\end{align} 
(ii) For $t=k\tau$ with $k\ge 2$ and $k\in \mathbb{N}$,
\begin{align}\label{s1_11}
{\mathcal P}_{\mathcal{A}}(x,t)=  
\displaystyle\int_{\mathbb{R}^{(k-1)\times d}} \mathcal{Q}_k( x_1, x_2, \cdots,x_{k-1}, x; \tau\big|  \gamma_0, x_1, x_2, \cdots,x_{k-1};0)  \prod\limits_{i=1}^{k-1}{\rm d}x_i, 
\end{align}
(iii)  For $t\in ((k-1)\tau, k\tau)$ with $k\ge 2$ and $k\in \mathbb{N}$,
\begin{align}\label{s1_12}
{\mathcal P}_{\mathcal{A}}(x,t)&= 
 \displaystyle\int _{\mathbb{R}^{2\times (k-1)\times d}}  \mathcal{Q}_{k-1}( y_1, y_2, \cdots, y_{k-1}; \tau\big|  x_1, x_2, \cdots, x_{k-1} ; t-(k-1)\tau)  \nonumber \\ 
  &\quad  \times \mathcal{Q}_k ( x_1, x_2, \cdots, x_{k-1}, x; t-(k-1)\tau \big|  \gamma_0, y_1, y_2, \cdots, y_{k-1}; 0)  \prod\limits_{i=1}^{k-1}{\rm d}x_i\prod\limits_{i=1}^{k-1}{\rm d}y_i,
\end{align}
\end{theorem}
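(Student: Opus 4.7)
The plan is to exploit the pathwise construction of the SDDE solution $X(t)$ as a concatenation of SDE segments which coincide with the auxiliary processes $X_i(t')$ of system (\ref{s1_2}) under the continuous condition (C3). Concretely, set $Y_i(t') := X((i-1)\tau + t')$ for $t' \in [0,\tau]$ and $i \ge 1$. A short computation shows that the $Y_i$'s satisfy (\ref{s1_2}) driven by $B_i(t') := B((i-1)\tau + t') - B((i-1)\tau)$, which are independent by the independent-increments property of $B$. The continuity of the path of $X$ at the knots $i\tau$ supplies $Y_i(0) = Y_{i-1}(\tau)$, i.e.\ exactly (C3). By the strong uniqueness part of Assumption H1, $Y_i(t') = X_i(t')$ almost surely, so $\mathcal{P}_\mathcal{A}(x,t)$ equals the density of $X_i(t-(i-1)\tau)$ for the appropriate $i$. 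Part (i) then follows immediately, since for $t \in (0,\tau]$ one has $X(t) = X_1(t)$ started from $\gamma_0$.

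For part (ii) with $t = k\tau$, I would marginalize the joint density of $(X_1(\tau), \ldots, X_k(\tau))$ under (C3) over the intermediate values $x_1,\ldots,x_{k-1}$. The crucial identity is
\begin{equation*}
p_{(\mathrm{C3})}(X_1(\tau)=x_1,\ldots,X_k(\tau)=x \mid X_1(0)=\gamma_0) = \mathcal{Q}_k(x_1,\ldots,x_{k-1},x;\tau \big| \gamma_0,x_1,\ldots,x_{k-1};0),
\end{equation*}
which says that enforcing (C3) amounts to evaluating the (C1) joint density at the self-consistent point where each integration variable $x_i$ plays the dual role of the value of $X_i(\tau)$ and of the initial datum $X_{i+1}(0)$. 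Integrating over $x_1,\ldots,x_{k-1}$ produces (\ref{s1_11}). For part (iii), let $t' = t - (k-1)\tau \in (0,\tau)$ and note that $X(t) = X_k(t')$. I would again perform the (C3)-to-(C1) reduction by integrating over $y_1,\ldots,y_{k-1}$ in place of the random initial data $X_{i+1}(0)$, and then apply the Chapman--Kolmogorov identity at the intermediate time $t'$ to the Markovian SDE (\ref{s1_2}). Because $(X_1,\ldots,X_{k-1})$ does not depend on $X_k$, the density at time $\tau$ given the values at $t'$ factors as $\mathcal{Q}_{k-1}(y_1,\ldots,y_{k-1};\tau \big| x_1,\ldots,x_{k-1};t')$, while the density at time $t'$ given the initial data is $\mathcal{Q}_k(x_1,\ldots,x_{k-1},x;t' \big| \gamma_0,y_1,\ldots,y_{k-1};0)$. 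Combining and rearranging the integrations then yields (\ref{s1_12}).

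The main obstacle I anticipate is making rigorous the self-consistent (C3)-to-(C1) reduction: under (C3) the initial values $X_i(0) = X_{i-1}(\tau)$ are random and coupled to the later evolution of the same system, so the joint law is not a plain product. A clean route is to insert the constraints as Dirac deltas $\delta(X_i(0) - X_{i-1}(\tau))$ in the joint density of the unconstrained system and integrate them out; this requires the conditional densities to be well defined and strictly positive, which is precisely the content of Assumption H1. A secondary technical point is the Fubini exchange of the $\prod{\rm d}x_i$ and $\prod{\rm d}y_i$ integrations appearing in (iii), which should likewise follow from the regularity of the transition densities $\mathcal{Q}_k$.
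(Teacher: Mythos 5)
Your proposal follows essentially the same route as the paper: the pathwise identification $X_i(t')=X(t'+(i-1)\tau)$ (the paper's Lemma \ref{lemma1}), the self-consistent evaluation ${\mathcal P}_{\mathcal{A}}(x_1,\tau;\cdots;x_k,k\tau)=\mathcal{Q}_k(x_1,\cdots,x_k;\tau\big|\gamma_0,x_1,\cdots,x_{k-1};0)$ (Lemma \ref{lemma3}(iii)), marginalization over the knot values for $t=k\tau$, and the Markov/bridge factorization at the intermediate time $t'=t-(k-1)\tau$ together with the $\mathcal{Q}_k\to\mathcal{Q}_{k-1}$ marginalization (Lemmas \ref{lemma2}, \ref{lemma_r1} and \ref{lemma3}(iv)) for part (iii). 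The only difference is cosmetic: where you propose inserting Dirac deltas to encode the continuity constraints, the paper rigorizes the same step by conditioning on the knot values and applying Bayes' rule to the $(C1)$ transition densities, with Assumption H1 guaranteeing (as you correctly anticipate) that the resulting ratios are well defined.
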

\begin{remark}
In theorem \ref{theorem1}, the density for  SDDE (\ref{s1_1}) is expressed  in terms of that for SDE (\ref{s1_2}). The latter density is well studied and can be  obtained by solving the corresponding Fokker-Planck equation.
\end{remark}

\begin{remark}\label{remark3}
$\forall u,v\in \mathbb{R}^k$, define  $\mathcal{Q}_k(u,s\big| v,s) = \lim\limits_{t\to s} \mathcal{Q}_k(u,t\big| v,s)=\delta(u-v)$,  and by using $f(x)= \int_{\mathbb{R}^d} \delta(x-y)f(y){\rm d}y $, equations  (\ref{s1_11}) and (\ref{s1_12}) can be combined together, and then equations (\ref{s1_10}), (\ref{s1_11}) and (\ref{s1_12}) can be written formally as
\begin{align}\label{s1_13}
{\mathcal P}_{\mathcal{A}}(x,t)=  
\begin{cases}
\mathcal{Q}_1 (x; \tau\big| \gamma_0; 0),\quad\text{for}\quad t\in (0, \tau],\\ \\  \\
 \displaystyle\int _{\mathbb{R}^{2\times(k-1)\times d}}\left[ \mathcal{Q}_{k-1}( y_1, y_2, \cdots, y_{k-1}; \tau\big|   x_1, x_2, \cdots, x_{k-1} ; t-(k-1)\tau)\right. \\ 
  \quad \left.\times \mathcal{Q}_k ( x_1, x_2, \cdots, x_{k-1}, x; t-(k-1)\tau \big|  \gamma_0, y_1, y_2, \cdots, y_{k-1}; 0)\right] \prod\limits_{i=1}^{k-1}{\rm d}x_i\prod\limits_{i=1}^{k-1}{\rm d}y_i, \\
  \quad\text{for }\quad  t\in((k-1)\tau, k\tau],k\in\mathbb{N}, k\ge 2.
\end{cases}
\end{align}
\end{remark}

For SDE (\ref{s1_2}), if  $f$ and $g$ are Lipshitz continuous, $f\in C^1(\mathbb{R}^{2\times d}, \mathbb{R}^d)$, $g\in C^2(\mathbb{R}^{2\times d}, \mathbb{R}^{d\times n})$, $\gamma\in C^1(\mathbb{R}, \mathbb{R}^d)$ and $\forall  x,y \in \mathbb{R}^d$, there exists a constant $\epsilon$ such that $g(x,y)\ge \epsilon>0$, then, as shown in \cite{BogachevRocknerShaposhnikov2009},  Assumption $H1$ holds. Moreover,  $\forall x_i\in \mathbb{R}^d$ ($i=1,2,\cdots,k$), the transitional density $\mathcal{Q}_k$  satisfies the celebrated Fokker-Planck equation 
\begin{align}\label{s1_14}
\begin{cases}
  \dfrac{\partial}{\partial t^\prime}\mathcal{Q}_{k}(x_1, x_2, \cdots,x_k; t^\prime\big|v,s)  = -\sum\limits_{i=1}^{k} \mathbf{\nabla}_{x_i}\cdot\left(F_i(x_1,x_2,\cdots,x_k,t^\prime)\mathcal{Q}_{k}(x_1, x_2, \cdots,x_k,t^\prime\big|v,s)\right)\\
\quad\quad+ \dfrac{1}{2}\sum\limits_{i=1}^k  {\rm {Tr}}\big\{ \mathbf{\nabla}_{x_i}\mathbf{\nabla}_{x_i} ^{T}\left(G _i(x_1,x_2,\cdots,x_k,t^\prime) G _i^T (x_1,x_2,\cdots,x_k,t^\prime) \mathcal{Q}_{k}\left(x_1, x_2,\cdots,x_k,t^\prime\big|v,s\right)\right)\big\}, \\ \quad \text{for}\quad \tau\ge t^\prime>s\ge 0,\\ \\
\lim\limits_{t^\prime\to s} \mathcal{Q}_{k}(x_1, x_2, \cdots,x_k; t^\prime\big|v,s) = \delta ((x_1,x_2,\cdots,x_k)-v),
\end{cases}
\end{align}
where $F_1(x_1, x_2, \cdots,x_k,t^\prime)=f(x_1, \gamma(\tau-t^\prime))$, $G_1(x_1, x_2, \cdots,x_k,t^\prime)=g(x_1, \gamma(\tau-t^\prime))$,  $F_i(x_1, x_2, \cdots,x_k, t^\prime)=f(x_i, x_{i-1})$, $G_i(x_1, x_2, \cdots,x_k, t^\prime)=g(x_i, x_{i-1})$ for $i=2, 3, \cdots,k$, $\{\cdot\}^T$ represents the transpose of the matrix '$\cdot$',  $\nabla_{x_i}=(\frac{\partial}{\partial x_{i1}},\frac{\partial}{\partial x_{i2}},\cdots,\frac{\partial}{\partial x_{id}})^T$ with $x_{ij}$ being the $j$-th component of the vector $x_i$ ($j=1, 2, \cdots,d$),  and ${\rm {Tr}}\{\cdot\}$ represents the trace of the matrix '$\cdot$'.

Therefore, the following corollary follows from  theorem \ref{theorem1} and remark \ref{remark3}.
\begin{coro}[Corollary of the main result]
Suppose  $f$ and $g$ are Lipshitz continuous, $f\in C^1(\mathbb{R}^{2\times d}, \mathbb{R}^d)$, $g\in C^2(\mathbb{R}^{2\times d}, \mathbb{R}^{d\times n})$,  $\gamma\in C^1(\mathbb{R}, \mathbb{R}^d)$  and $\forall  x,y \in \mathbb{R}^d$,  there exists a positive constant $\epsilon$ such that  $g(x,y)\ge \epsilon$. Then   $\forall t >0$,  the probability density function $\mathcal{P}_\mathcal{A}(x,t)$  for the solution $X(t)$ to  SDDE (\ref{s1_1}) exists.   Moreover, 
\begin{align} \label{s1_15}
{\mathcal P}_{\mathcal{A}}(x,t)=  
\begin{cases}
 \mathcal{Q}_1(  x; t\big|  \gamma_0;0), \quad\text{for} \quad  t\in (0, \tau],\\ \\  \\
 \displaystyle\int _{\mathbb{R}^{2\times (k-1)\times d}}\left[ \mathcal{Q}_{k-1}( y_1, y_2, \cdots, y_{k-1}; \tau\big|   x_1, x_2, \cdots, x_{k-1}; t-(k-1)\tau)\right. \\ 
  \quad \left.\times \mathcal{Q}_k ( x_1, x_2, \cdots, x_{k-1}, x; t-(k-1)\tau \big|  \gamma_0, y_1, y_2, \cdots, y_{k-1}; 0)\right] \prod\limits_{i=1}^{k-1}{\rm d}x_i\prod\limits_{i=1}^{k-1}{\rm d}y_i, \\
  \quad\text{for }\quad  t\in((k-1)\tau, k\tau]\quad\text{with}\quad k\in\mathbb{N}, k\ge 2,
\end{cases}
\end{align}
\normalsize
where $\mathcal{Q}_k$ satisfies PDE (\ref{s1_14}).
\end{coro}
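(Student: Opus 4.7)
The plan is to split the corollary into three tasks: first, verify that the stronger regularity hypotheses imply Assumption~H1 for every $k$; second, derive the Fokker--Planck equation (\ref{s1_14}) satisfied by $\mathcal{Q}_k$; and third, invoke Theorem~\ref{theorem1} together with Remark~\ref{remark3} to obtain (\ref{s1_15}). Only the first two tasks require real work, since the third is quotation.

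For Assumption~H1, I would exploit the triangular block structure of system (\ref{s1_2}). Viewed as a single $kd$-dimensional SDE in $\widetilde{\mathbf{X}}_k$, the concatenated drift and the block-diagonal diffusion are globally Lipschitz in $\widetilde{\mathbf{X}}_k$ and continuous in $t'$ (since $\gamma\in C^1$ on the compact interval $[0,\tau]$), so the Picard iteration of \cite{Protter2004,Klebaner2005} gives the unique strong solution on $[0,\tau]$ for any deterministic $v_0$. Because the $B_i$ are independent and the $i$-th diffusion block equals $g(X_i,X_{i-1})$ with $g\ge\epsilon>0$, the full diffusion matrix $GG^T$ is bounded below by $\epsilon^2 I_{kd}$; combined with $f\in C^1$ and $g\in C^2$, this uniform ellipticity plus smoothness yields a smooth transition density $\mathcal{Q}_k$ by Malliavin calculus or by classical parabolic theory \cite{Nualart2006,BogachevKrylovRocknerShaposhnikov2015}. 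Strict positivity of $\mathcal{Q}_k$ on $\mathbb{R}^{kd}$ then follows either from Aronson-type heat-kernel lower bounds \cite{Aronson1968,Davies1989,NorrisStroock1991} applied to the full system, or, more cleanly, from the positivity criterion of Bogachev--R\"ockner--Shaposhnikov \cite{BogachevRocknerShaposhnikov2009}, whose hypotheses are met since the time-dependent coefficients are bounded and H\"older continuous on $[0,\tau]$.

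For (\ref{s1_14}) I would then run the standard adjoint-generator argument. The generator of the time-inhomogeneous diffusion $\widetilde{\mathbf{X}}_k$ is
\[
\mathcal{L}_{t'}\varphi \;=\; \sum_{i=1}^{k} F_i\cdot \nabla_{x_i}\varphi \;+\; \tfrac{1}{2}\sum_{i=1}^{k}\mathrm{Tr}\bigl(G_iG_i^T\,\nabla_{x_i}\nabla_{x_i}^T\varphi\bigr),
\]
with $F_i,G_i$ as specified in the statement. Applying It\^o's formula to $\varphi(\widetilde{\mathbf{X}}_k(t'))$ for $\varphi\in C_c^\infty(\mathbb{R}^{kd})$, taking expectations, and integrating by parts twice (legitimate because $\mathcal{Q}_k$, $F_i$, and $G_i$ are all smooth) moves every derivative off the test function and leaves (\ref{s1_14}) in the strong sense; the initial $\delta$-condition is exactly (\ref{s1_6}).

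With Assumption~H1 verified and (\ref{s1_14}) established, Theorem~\ref{theorem1} and Remark~\ref{remark3} apply verbatim and yield (\ref{s1_15}). I expect the strict-positivity step to be the main obstacle, because the first block of (\ref{s1_2}) is genuinely non-autonomous --- the ``past'' enters through the deterministic, time-dependent term $\gamma(\tau-t')$ rather than through another diffusing coordinate --- so the usual stationary heat-kernel estimates do not apply off the shelf. Uniform ellipticity and $\gamma\in C^1$ on the compact interval $[0,\tau]$ should nevertheless bring this block within the scope of \cite{BogachevRocknerShaposhnikov2009}, so no additional structural assumption on $\gamma$ beyond $C^1$ should be required.
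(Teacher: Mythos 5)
Your proposal follows essentially the same route as the paper: verify that the stated regularity and ellipticity hypotheses imply Assumption~H1 (the paper simply cites \cite{BogachevRocknerShaposhnikov2009} for this), note that $\mathcal{Q}_k$ then satisfies the Fokker--Planck equation (\ref{s1_14}), and conclude by invoking Theorem~\ref{theorem1} and Remark~\ref{remark3}. Your treatment is in fact more detailed than the paper's (which dispatches the corollary in one sentence), and your observation about the non-autonomous first block is a fair point of care that the paper glosses over.
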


\section{Proof of Theorem \ref{theorem1}}
We first present some lemmas which will be used in proof of Theorem \ref{theorem1}.

For each solution to SDDE (\ref{s1_1}), we can uniquely construct a solution to SDE (2) with continuous condition. In fact,  construct $X_i(t^\prime)$ by  $X_i(t^\prime)=X(t^\prime+(i-1)\tau)$ for $0\le t'\le \tau$ and $i=1,2,\cdots,k$, then the path of $X_i(t^\prime)$ in the time interval $t^\prime \in [0,\tau]$ is coincident with that of $X(t)$  in the time interval $t\in [(i-1)\tau, i\tau]$. It is straightforward to check that $X_i(t^\prime)$ ($i=1,2,\cdots,k$) satisfies  SDE  (\ref{s1_2}) and the continuous condition (\ref{s1_5}) (with the constant $x_0$ in (\ref{s1_5}) now becomes $\gamma_0$). On the other hand, for each solution to the SDE defined by (\ref{s1_2})  and (\ref{s1_5}) (with $x_0=\gamma_0$ in (\ref{s1_5})), we can uniquely construct a solution to SDDE (\ref{s1_2}) by     $X(t)=X_i(t-(i-1)\tau)$ for $t\in [(i-1)\tau, i\tau]$ and $i=1, 2, \cdots,k$. Therefore,  we get the following lemma.

\begin{lemma}\label{lemma1}
 SDDE  (\ref{s1_1}) in the time span $t\in [0, k\tau]$ with $k\in \mathbb{N}$ has strong unique solution if and only if the SDE defined by  (\ref{s1_2}) and (\ref{s1_5})  with $x_0=\gamma_0$  has unique strong solution. Moreover, the two solutions are related  by
\begin{align}\label{s1_16}
X(t)\overset{a.s.}{=}
\begin{cases}
X_1(t)  &\text{for} \quad t\in [0,\tau],\\
X_2(t-\tau)  &\text{for} \quad t\in [\tau,2\tau],\\
\cdots\\
X_{k}(t-(k-1)\tau) &\text{for} \quad t\in [(k-1)\tau,k\tau],\\
\end{cases}
\end{align}
or equivalently
\begin{align}\label{s1_17}
X_i(t^\prime)\overset{a.s.}{=} X(t^\prime+(i-1)\tau)\quad \text{for}\quad t^\prime\in [0,\tau] \quad \text{and}\quad i=1, 2, \cdots, k.
\end{align}
\end{lemma}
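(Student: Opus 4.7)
The plan is to establish an almost-sure bijection between the solution of SDDE (\ref{s1_1}) on $[0,k\tau]$ and the solution of the SDE system (\ref{s1_2}) with continuous condition (\ref{s1_5}) (taking $x_0=\gamma_0$), via the explicit time-shift identity (\ref{s1_16})--(\ref{s1_17}). Once this bijection is in place, existence and uniqueness transfer automatically between the two formulations, and the representation (\ref{s1_16}) is baked into the construction.

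First I would set up the Brownian-motion bookkeeping. Starting from a Brownian motion $B(t)$ on $[0,k\tau]$, define $B_i(t'):=B(t'+(i-1)\tau)-B((i-1)\tau)$ for $t'\in[0,\tau]$ and $i=1,\dots,k$; the stationary independent-increment property of $B$ makes $B_1,\dots,B_k$ independent standard Brownian motions on $[0,\tau]$, exactly as required in (\ref{s1_2}). Conversely, given independent $B_1,\dots,B_k$ on $[0,\tau]$, one recovers a Brownian motion on $[0,k\tau]$ by concatenation, $B(t):=\sum_{j=1}^{i-1}B_j(\tau)+B_i(t-(i-1)\tau)$ for $t\in[(i-1)\tau,i\tau]$.

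Next I would verify the forward direction: assume $X(t)$ is a strong solution of SDDE (\ref{s1_1}) and set $X_i(t'):=X(t'+(i-1)\tau)$. Working on each interval $[(i-1)\tau,i\tau]$ separately and performing the change of variable $s=s'+(i-1)\tau$ in the integrated form of (\ref{s1_1}), one sees that for $i\ge 2$ the delay argument becomes $X(s-\tau)=X_{i-1}(s')$, while for $i=1$ the delay falls in the history segment and equals $\gamma(\tau-s')$. This reproduces exactly the $i$-th line of (\ref{s1_2}). Continuity of $X$ at the junction times $i\tau$ yields $X_i(0)=X((i-1)\tau)=X_{i-1}(\tau)$ together with $X_1(0)=\gamma_0$, which is precisely (\ref{s1_5}). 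The reverse direction is the mirror argument: define $X(t)$ piecewise by (\ref{s1_16}), observe that (\ref{s1_5}) is exactly what forces continuity across the junctions $t=i\tau$, and undo the change of variable on each interval to recover (\ref{s1_1}).

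The one step that requires genuine care is the change-of-variable identity for the stochastic integral, $\int_{(i-1)\tau}^{t}g(X(s),X(s-\tau))\,dB(s)=\int_0^{t-(i-1)\tau}g(X_i(s'),X_{i-1}(s'))\,dB_i(s')$, which I would justify by the standard simple-integrand approximation for It\^o integrals with respect to a time-shifted adapted Brownian motion, noting that the filtration generated by $B_i$ on $[0,\tau]$ coincides with the shifted filtration of $B$ on $[(i-1)\tau,i\tau]$. Given this identity, the two maps $X\mapsto(X_1,\dots,X_k)$ and $(X_1,\dots,X_k)\mapsto X$ are mutual inverses almost surely, so existence or uniqueness on either side immediately gives the same on the other, and (\ref{s1_16})--(\ref{s1_17}) hold by construction.
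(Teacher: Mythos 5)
Your proposal is correct and follows essentially the same route as the paper, which proves the lemma by the same time-shift bijection $X_i(t')=X(t'+(i-1)\tau)$ stated informally in the paragraph preceding the lemma. In fact you supply details the paper leaves implicit --- the concatenation/splitting of the driving Brownian motion, the change-of-variable identity for the It\^o integral, and the filtration matching --- so your write-up is, if anything, more complete than the original.
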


\begin{lemma}\label{lemma2} Suppose Assumption $H1$ holds. Then
$\forall k\in \mathbb{N}$, $t^\prime\in (0, \tau)$ and $x_i,y_i,z_i\in\mathbb{R}^d$ ($i=1,2,\cdots,k$),  the density for $\mathbb{R}^{k\times d}$-valued solution $\widetilde{\mathbf{X}}_k(t^\prime)$ at  $  (x_1, x_2, \cdots, x_k) $  defined by SDE (\ref{s1_2}) with bridge condition $\widetilde{\mathbf{X}}_k(0) = (y_1, y_2, \cdots, y_k)$ and $\widetilde{\mathbf{X}}_k(\tau)=(z_1, z_2,\cdots, z_k) $ exists and can be expressed as
\begin{align}\label{s1_18}
&p(\widetilde{\mathbf{X}}_k(t^\prime) = (x_1, x_2, \cdots, x_k)  \big|\widetilde{\mathbf{X}}_k(0) = (y_1, y_2, \cdots, y_k); \widetilde{\mathbf{X}}_k(\tau) = (z_1, z_2, \cdots, z_k))\nonumber\\
& = \frac{{\mathcal Q}_k (  z_1,z_2, \cdots, z_k; \tau\big| x_1, x_2, \cdots, x_k;  t^\prime){\mathcal Q}_k (   x_1, x_2, \cdots, x_k; t^\prime \big| y_1, y_2, \cdots, y_{k}; 0)}{{\mathcal Q}_k (  z_1, z_2, \cdots, z_k; \tau \big|  y_1, y_2, \cdots, y_{k}; 0)}\nonumber\\
\end{align}
\normalsize
\end{lemma}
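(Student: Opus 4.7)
The plan is to derive the bridge density as a standard consequence of Bayes' rule together with the Markov property of the augmented process $\widetilde{\mathbf{X}}_k(t^\prime)$. The SDE system (\ref{s1_2}) is driven by independent Brownian motions $B_1,\dots,B_k$, and the drift/diffusion coefficients of each equation depend only on the current values of the components, so $\widetilde{\mathbf{X}}_k(t^\prime)$ is itself an $\mathbb{R}^{k\times d}$-valued Markov process. Assumption $H1$ guarantees that its transition density $\mathcal{Q}_k$ exists, is strictly positive, and satisfies the Chapman--Kolmogorov identity, so every conditional density I write down will be well-defined (the denominator below is strictly positive, so no division-by-zero issue arises).

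First I would write the bridge density via Bayes' formula in the form
\begin{align*}
&p\bigl(\widetilde{\mathbf{X}}_k(t^\prime)=(x_1,\dots,x_k)\,\big|\,\widetilde{\mathbf{X}}_k(0)=(y_1,\dots,y_k);\,\widetilde{\mathbf{X}}_k(\tau)=(z_1,\dots,z_k)\bigr) \\
&\quad= \frac{p\bigl(\widetilde{\mathbf{X}}_k(\tau)=(z_1,\dots,z_k)\,\big|\,\widetilde{\mathbf{X}}_k(t^\prime)=(x_1,\dots,x_k);\,\widetilde{\mathbf{X}}_k(0)=(y_1,\dots,y_k)\bigr)\, p\bigl(\widetilde{\mathbf{X}}_k(t^\prime)=(x_1,\dots,x_k)\,\big|\,\widetilde{\mathbf{X}}_k(0)=(y_1,\dots,y_k)\bigr)}{p\bigl(\widetilde{\mathbf{X}}_k(\tau)=(z_1,\dots,z_k)\,\big|\,\widetilde{\mathbf{X}}_k(0)=(y_1,\dots,y_k)\bigr)}.
\end{align*}
Next, I would invoke the Markov property of $\widetilde{\mathbf{X}}_k$ to drop the conditioning on the initial state in the numerator's first factor, replacing it with $\mathcal{Q}_k(z_1,\dots,z_k;\tau\,|\,x_1,\dots,x_k;t^\prime)$. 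The remaining two factors are, by Definition~\ref{def_1}, exactly $\mathcal{Q}_k(x_1,\dots,x_k;t^\prime\,|\,y_1,\dots,y_k;0)$ and $\mathcal{Q}_k(z_1,\dots,z_k;\tau\,|\,y_1,\dots,y_k;0)$, respectively. Substituting these three identifications yields the claimed formula (\ref{s1_18}).

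The only nontrivial step is justifying that $\widetilde{\mathbf{X}}_k$ really is Markov with transition kernel $\mathcal{Q}_k$, but this is a standard fact for the strong solution of a time-inhomogeneous SDE driven by independent Brownian motions with coefficients depending only on the state, and it is implicit in the way Assumption $H1$ is phrased (existence of a well-defined transition density $\mathcal{Q}_k(\,\cdot\,;t^\prime\,|\,v;s)$ for arbitrary starting time $s$ and value $v$). So I would state the Markov property as a direct consequence of the strong uniqueness together with independent-increment structure of the $B_i$'s, and then carry out the one-line Bayes computation above. The main conceptual obstacle, if any, is merely making sure the strict-positivity part of $H1$ is used to ensure the denominator $\mathcal{Q}_k(z_1,\dots,z_k;\tau\,|\,y_1,\dots,y_k;0)$ does not vanish, so that the conditional density is regular and the factorization is rigorous.
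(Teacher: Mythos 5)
Your proposal is correct and follows essentially the same route as the paper: Bayes' formula for the conditional density, followed by the Markov property of the augmented process $\widetilde{\mathbf{X}}_k$ to reduce $p(\widetilde{\mathbf{X}}_k(\tau)=\cdot\,|\,\widetilde{\mathbf{X}}_k(0)=\cdot;\widetilde{\mathbf{X}}_k(t^\prime)=\cdot)$ to the transition density $\mathcal{Q}_k(\cdot;\tau|\cdot;t^\prime)$, with the strict positivity in Assumption $H1$ guaranteeing the denominator is nonzero. No gaps.
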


\begin{proof}[Proof of Lemma \ref{lemma2}]

If Assumption $H1$ holds, the   conditional density of $\widetilde{\mathbf{X}}_k (\tau)$  given both  values of $\widetilde{\mathbf{X}}_k (0)$ and $\widetilde{\mathbf{X}}_k (t^\prime)$ exists. In fact, by Markov property of SDE (\ref{s1_2}) \cite{Protter2004}, this density is exactly the same as  the   density of   $\widetilde{\mathbf{X}}_k (\tau)$  under the condition that only  the value of $\widetilde{\mathbf{X}}_k (t^\prime)$ is given, i.e., 
\begin{align}\label{s1_19}
&p(\widetilde{\mathbf{X}}_k(\tau) = (z_1, z_2, \cdots, z_k) \big| \widetilde{\mathbf{X}}_k(0) = (y_1, y_2, \cdots, y_k) ; \widetilde{\mathbf{X}}_k(t^\prime) = (x_1, x_2, \cdots, x_k) )\nonumber\\
&=p(\widetilde{\mathbf{X}}_k(\tau) = (z_1, z_2, \cdots, z_k) \big| \widetilde{\mathbf{X}}_k(t^\prime) = (x_1, x_2, \cdots, x_k)) \nonumber\\
&=\mathcal{Q}_k ( z_1, z_2, \cdots, z_k; \tau\big|  x_1, x_2, \cdots, x_k; t^\prime).
\end{align}
The identity
\begin{align}\label{s1_20}
&p(\widetilde{\mathbf{X}}_k(t^\prime) =  (x_1, x_2, \cdots, x_k) \big|\widetilde{\mathbf{X}}_k(0) = (y_1, y_2, \cdots, y_k) ; \widetilde{\mathbf{X}}_k(\tau) = (z_1, z_2, \cdots, z_k) )\nonumber\\
&  = p(\widetilde{\mathbf{X}}_k(t^\prime) = (x_1, x_2, \cdots, x_k)  \big|\widetilde{\mathbf{X}}_k(0) = (y_1, y_2, \cdots, y_k) )\nonumber\\
& \quad\times \frac{p(\widetilde{\mathbf{X}}_k(\tau) = (z_1, z_2, \cdots, z_k) \big| \widetilde{\mathbf{X}}_k(0) = (y_1, y_2, \cdots, y_k) ; \widetilde{\mathbf{X}}_k(t^\prime) = (x_1, x_2, \cdots, x_k) )}{p(\widetilde{\mathbf{X}}_k(\tau) = (z_1, z_2, \cdots, z_k) \big| \widetilde{\mathbf{X}}_k(0) = (y_1, y_2, \cdots, y_k) )}
\end{align}
indicates that  the density for  $\widetilde{\mathbf{X}}_k(t^\prime)$ defined by (\ref{s1_2}) under the  bridge condition $\widetilde{\mathbf{X}}_k(0) = (y_1, y_2, \cdots, y_k)$ and $\widetilde{\mathbf{X}}_k(\tau)=(z_1, z_2,\cdots, z_k)$ exists since the right hand side of (\ref{s1_20}) is well defined  by Assumption $H1$.

Substitute (\ref{s1_19}) into (\ref{s1_20}), and change the notation $p$ to $\mathcal{Q}_k$ (e.g., see (\ref{s1_9})), we get (\ref{s1_18}).
\end{proof}

\begin{lemma}\label{lemma_r1} 
$\forall k\in \mathbb{N}, x_i \in \mathbb{R}^d$ ($i=1,2, \cdots,k$) and $\tau\ge t^\prime>s\ge 0$,    the following relationship between $\mathcal{Q}_{k+1}$ and $\mathcal{Q}_{k}$ is true,
\begin{align}\label{eq_r1}
\mathcal{Q}_k(x_1, x_2, \cdots,x_k; t^\prime\big|y_1, y_2, \cdots, y_k;s)  = \int_{\mathbb{R}^d} \mathcal{Q}_{k+1}(x_1, x_2, \cdots,x_k,x_{k+1}; t^\prime\big|y_1,y_2,\cdots,y_k, y_{k+1};s) {\rm d}x_{k+1}
\end{align}
\end{lemma}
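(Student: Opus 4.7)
The plan is to exploit the fact that the SDE system (\ref{s1_2}) has a \emph{triangular} structure: the drift and diffusion of $X_i$ depend only on $X_i$ and $X_{i-1}$ (or on $\gamma$ when $i=1$), and $X_i$ is driven only by $B_i$. Consequently, the first $k$ equations of the $(k+1)$-variable version of (\ref{s1_2}) coincide exactly with the equations of the $k$-variable version, and they do not involve $X_{k+1}$, $B_{k+1}$, or the extra initial value $y_{k+1}$. Since $B_1,\dots,B_{k+1}$ are independent and strong solutions are measurable functionals of the driving noise and initial data, the process $(X_1,\dots,X_k)$ arising from the $(k+1)$-variable system started at $(y_1,\dots,y_{k+1})$ is almost surely identical to the process $(X_1,\dots,X_k)$ arising from the $k$-variable system started at $(y_1,\dots,y_k)$.

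Granted this identification, the proof reduces to the standard marginalization identity for joint densities. First I would rewrite the left-hand side of (\ref{eq_r1}) via (\ref{s1_9}) as
$$
p\bigl(X_1(t^\prime)=x_1,\dots,X_k(t^\prime)=x_k \,\big|\, \widetilde{\mathbf{X}}_k(s)=(y_1,\dots,y_k)\bigr).
$$
Next I would observe that integrating $\mathcal{Q}_{k+1}$ over $x_{k+1}\in\mathbb{R}^d$ simply marginalizes out the last coordinate of $\widetilde{\mathbf{X}}_{k+1}(t^\prime)$, yielding
$$
p\bigl(X_1(t^\prime)=x_1,\dots,X_k(t^\prime)=x_k \,\big|\, \widetilde{\mathbf{X}}_{k+1}(s)=(y_1,\dots,y_{k+1})\bigr).
$$
The final step is to show that these two conditional densities are equal.

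The only real step is this last equality, which I would justify via independence. The strong solution $(X_1(t^\prime),\dots,X_k(t^\prime))$ is a measurable functional of $(B_1,\dots,B_k)$ and the initial data $(y_1,\dots,y_k)$; this functional is independent of both $B_{k+1}$ and $y_{k+1}$, so enlarging the conditioning by $y_{k+1}$ does not alter the conditional law. Equivalently, the transition kernel of the augmented $(k+1)$-dimensional Markov process preserves the marginal on the first $k$ coordinates, which is exactly the content of pathwise uniqueness under Assumption $H1$ together with the fact that the first $k$ equations form a closed sub-system. Once this is in place, (\ref{eq_r1}) is immediate from Fubini applied to the joint density in the $(k+1)$-variable setting. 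I do not anticipate any technical obstacle beyond stating carefully how Assumption $H1$ transfers from the $(k+1)$-variable SDE to its first $k$-coordinate marginal.
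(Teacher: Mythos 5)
Your proposal is correct and follows essentially the same route as the paper: integrate $\mathcal{Q}_{k+1}$ over $x_{k+1}$ to marginalize the last coordinate, then drop the conditioning on $y_{k+1}$ because the first $k$ equations of (\ref{s1_2}) form a closed subsystem whose solution does not involve $X_{k+1}(s)$ or $B_{k+1}$. Your added remarks on measurability with respect to the driving noises make the independence step slightly more explicit than the paper's, but the argument is the same.
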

\begin{proof}[Proof of Lemma \ref{lemma_r1}]
Note that
\begin{align}\label{eq_r2}
&\int_{\mathbb{R}^d} \mathcal{Q}_{k+1}(x_1, x_2, \cdots, x_{k+1}; t^\prime\big|y_1, y_2, \cdots ,y_{k+1};s){\rm d}x_{k+1}\nonumber\\
&=\int_{\mathbb{R}^d} p(\widetilde{\mathbf{X}}_{k+1}(t^\prime) = (x_1, x_2, \cdots, x_{k+1})\big|\widetilde{\mathbf{X}}_{k+1}(s)=(y_1,y_2,\cdots,y_{k+1})){\rm d}x_{k+1}\nonumber\\
&= p(\widetilde{\mathbf{X}}_{k}(t^\prime) = (x_1, x_2, \cdots, x_{k})\big|\widetilde{\mathbf{X}}_{k+1}(s)=(y_1,y_2,\cdots,y_{k+1})). \nonumber\\
\end{align}
It follows from SDE (\ref{s1_2}) that the value of  $\widetilde{\mathbf{X}}(t^\prime)$ depends only on its initial value  $\widetilde{\mathbf{X}}_{k}(s)$ and independent of the value of  $X_{k+1}(s)$, i.e., 
\begin{align}\label{eq_r3}
&p(\widetilde{\mathbf{X}}_{k}(t^\prime) = (x_1, x_2, \cdots, x_{k})\big|\widetilde{\mathbf{X}}_{k+1}(s)=(y_1,y_2,\cdots,y_{k+1}))\nonumber\\
& = p(\widetilde{\mathbf{X}}_{k}(t^\prime) = (x_1, x_2, \cdots, x_{k})\big|\widetilde{\mathbf{X}}_{k}(s)=(y_1,y_2,\cdots,y_{k}))\nonumber\\
&=\mathcal{Q}_{k}(x_1, x_2, \cdots, x_{k}; t^\prime\big|y_1, y_2, \cdots ,y_{k};s).
\end{align}
Then by using (\ref{eq_r2}) and (\ref{eq_r3}), we get (\ref{eq_r1}).
\end{proof}

\begin{lemma}\label{lemma3}
Supposse Assumption $H1$ holds. Then the following relationship between $\mathcal{P}_\mathcal{A}$, the density for SDDE (\ref{s1_1}), and $\mathcal{Q}_\mathcal{A}$, the transitional density for the SDE  (\ref{s1_2}),  is true.\\
\\
(i) \\ $\forall t\in (0, \tau]$ and $y \in \mathbb{R}^d$,
\begin{align}\label{s1_21}
\mathcal{P}_\mathcal{A}(y, t)= Q_1(y;t\big| \gamma_0; 0).
\end{align}
(ii) \\
 $\forall k\ge 2$, $k\in \mathbb{N}$ and $ x_i\in \mathbb{R}^d$ ($i=1,2,\cdots,k$)
\begin{align}\label{s1_22}
{\mathcal P}_{\mathcal{A}} (x_k, k\tau \big| x_1, \tau; x_2, 2\tau; \cdots; x_{k-1}, (k-1)\tau) = \frac{\mathcal{Q}_k(x_1, x_2, \cdots,x_k; \tau\big|\gamma_0, x_1, \cdots, x_{k-1}; 0)}{\mathcal{Q}_{k-1}( x_1, x_2, \cdots,x_{k-1};\tau\big| \gamma_0, x_1, \cdots, x_{k-2}; 0)}.
\end{align}
(iii) \\ $\forall k\ge 2, k\in \mathbb{N}$ and $x_i\in \mathbb{R}^d$ ($i=1,2,\cdots,k$),
\begin{align}\label{s1_23}
{\mathcal P}_{\mathcal{A}}(x_1, \tau; x_2, 2\tau; \cdots; x_k, k\tau)= \mathcal{Q}_k( x_1, x_2, \cdots,x_k; \tau\big| \gamma_0, x_1, \cdots, x_{k-1}; 0).
\end{align}
(iv)
\\ $\forall k\ge 2, k \in \mathbb{N}$ and $t\in ((k-1)\tau, k\tau)$, 
\begin{align}\label{s1_24}
&{\mathcal P}_{\mathcal{A}}(y,t \big| x_1, \tau; x_2, 2\tau; \cdots; x_{k-1}, (k-1)\tau)\nonumber\\
&= \displaystyle{\int} _{\mathbb{R}^{(k-1)\times d}} {\mathcal Q}_{k-1} (  x_1,x_2, \cdots, x_{k-1}; \tau\big| y_1, y_2, \cdots, y_{k-1}; t-(k-1)\tau) \nonumber\\
&\quad\quad\times  \dfrac{{\mathcal Q}_k ( y_1, y_2, \cdots, y_{k-1},y; t-(k-1) \tau\big|  \gamma_0, x_1, x_2, \cdots, x_{k-1}; 0)}{{\mathcal Q}_{k-1} (  x_1, x_2, \cdots, x_{k-1}; \tau \big| \gamma_0, x_1, x_2, \cdots, x_{k-2}; 0)}  \; {\rm d}y_1 {\rm d}y_2\cdots{\rm d}y_{k-1}
\end{align}
\end{lemma}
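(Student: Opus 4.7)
The unifying strategy is to use Lemma \ref{lemma1} to recast every quantity about $X(t)$ from SDDE (\ref{s1_1}) as a quantity about the process $\widetilde{\mathbf{X}}_k(t^\prime)$ from SDE (\ref{s1_2}) under the continuous condition (C3), and then to relate the C3 law to the initial-value (C1) law for which the transition density $\mathcal{Q}_k$ is defined and is Markov.

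Part (i) is immediate from Lemma \ref{lemma1}: for $t\in(0,\tau]$ we have $X(t)\overset{a.s.}{=}X_1(t)$, and by definition the density of $X_1(t)$ at $y$ starting from $X_1(0)=\gamma_0$ is $\mathcal{Q}_1(y;t|\gamma_0;0)$. For part (iii), Lemma \ref{lemma1} again gives $X(i\tau)\overset{a.s.}{=}X_i(\tau)$ for $i=1,\ldots,k$. The key observation, which I will justify by writing out the iterative construction of the C3 solution (generate $X_1$ from $\gamma_0$; then $X_2$ from the random initial value $X_1(\tau)$ using an independent Brownian motion $B_2$; and so on), is that on the event $\{X_i(\tau)=x_i:i=1,\ldots,k-1\}$ the random C3 initial vector $(\gamma_0,X_1(\tau),\ldots,X_{k-1}(\tau))$ coincides identically with the deterministic C1 initial vector $v_0=(\gamma_0,x_1,\ldots,x_{k-1})$. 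A delta-function substitution then collapses the C3 joint density at $(x_1,\ldots,x_k)$ to the C1 joint density with this $v_0$, giving (\ref{s1_23}). Part (ii) follows by writing the conditional density in (\ref{s1_22}) as the ratio of the joint density in (\ref{s1_23}) to the joint density of $(X(\tau),\ldots,X((k-1)\tau))$ at $(x_1,\ldots,x_{k-1})$, the latter being the same identity with $k$ replaced by $k-1$.

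For (iv), set $s=t-(k-1)\tau\in(0,\tau)$, so that by Lemma \ref{lemma1} the left-hand side of (\ref{s1_24}) equals the C3 conditional density of $X_k(s)$ given $X_i(\tau)=x_i$ for $i=1,\ldots,k-1$. I will introduce the intermediate vector $\widetilde{\mathbf{X}}_{k-1}(s)=(X_1(s),\ldots,X_{k-1}(s))$, condition on it and integrate it out to write the target as
\[
\int_{\mathbb{R}^{(k-1)\times d}} p_{C3}\bigl(\widetilde{\mathbf{X}}_k(s)=(y_1,\ldots,y_{k-1},y)\bigm|\widetilde{\mathbf{X}}_{k-1}(\tau)=(x_1,\ldots,x_{k-1})\bigr)\,{\rm d}y_1\cdots{\rm d}y_{k-1}.
\]
By the C1/C3 equivalence established in (iii), this conditional density equals the corresponding C1 conditional density with $v_0=(\gamma_0,x_1,\ldots,x_{k-1})$. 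Under C1 the process $\widetilde{\mathbf{X}}_k$ is Markov, so Bayes' rule decomposes the numerator as $\mathcal{Q}_k(y_1,\ldots,y_{k-1},y;s|\gamma_0,x_1,\ldots,x_{k-1};0)$ times the conditional density of $\widetilde{\mathbf{X}}_{k-1}(\tau)=(x_1,\ldots,x_{k-1})$ given $\widetilde{\mathbf{X}}_k(s)=(y_1,\ldots,y_{k-1},y)$, which by the Markov property is independent of $v_0$ and, after marginalizing out $X_k(\tau)$ via Lemma \ref{lemma_r1}, equals $\mathcal{Q}_{k-1}(x_1,\ldots,x_{k-1};\tau|y_1,\ldots,y_{k-1};s)$. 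The denominator is the C1 marginal of $\widetilde{\mathbf{X}}_{k-1}(\tau)$, which Lemma \ref{lemma_r1} identifies with $\mathcal{Q}_{k-1}(x_1,\ldots,x_{k-1};\tau|\gamma_0,x_1,\ldots,x_{k-2};0)$. Substituting these into the integral reproduces exactly (\ref{s1_24}).

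The main obstacle I expect is the C1/C3 equivalence invoked in (iii) and (iv). It reads plausibly as ``conditional on $X_i(\tau)=x_i$, the random C3 initial values become the fixed values $x_i$,'' but because in C3 the initial values of $X_2,\ldots,X_k$ are pathwise functionals of the earlier components, rigorously justifying the delta-function substitution requires the iterative construction above together with the independence of $B_1,\ldots,B_k$; this will be the most delicate step of the proof.
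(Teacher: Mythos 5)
Your proposal is correct and follows essentially the same route as the paper: use Lemma \ref{lemma1} to time-shift the SDDE into system (\ref{s1_2}), convert the continuous condition (C3) into the initial-value condition (C1) by conditioning on the endpoint values $X_i(\tau)=x_i$, and then apply Bayes' rule, the Markov property, and the marginalization identity of Lemma \ref{lemma_r1}. The only differences are organizational — you prove (iii) first and obtain (ii) by division where the paper proves (ii) first and obtains (iii) by the chain rule, and in (iv) you inline the bridge-density computation that the paper isolates as Lemma \ref{lemma2} — and the delicate C3-to-C1 step you flag is handled in the paper at essentially the same level of informality.
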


\begin{proof}[Proof of Lemma \ref{lemma3}]

First, we prove (i).

 By Lemma \ref{lemma1}, the density of $X(t)$ for $0<t\le \tau$ defined by (\ref{s1_1}) is  the same as the density of $X_1(t)$   defined by (2)  under the condition that $X_1(0)=\gamma_0$. Therefore, (i) is true.

Now, we prove (ii).

  By Lemma \ref{lemma1},  the density of $X(k\tau)$ under the condition that $X(\tau)=x_1, X(2\tau)=x_2, \cdots, X_{k-1}((k-1)\tau)=x_{k-1}$ is exactly the same as the density of $X_k(\tau)$ defined by (\ref{s1_2})  under the continuous  condition that $X_1(0)=\gamma_0, X_1(\tau)=X_2(0)=x_1, \cdots, X_{k-1}(\tau)=X_{k}(0)=x_{k-1}$, i.e.,
\begin{align}\label{s1_25}
&{\mathcal P}_{\mathcal{A}} (x_k, k\tau \big| x_1, \tau; x_2, 2\tau; \cdots; x_{k-1}, (k-1)\tau)\nonumber\\
&=p(X(k\tau)=x_k \big|X(0)=\gamma_0; X(\tau)=x_1; X(2\tau)=x_2; \cdots; X((k-1)\tau)=x_{k-1})\nonumber\\
& =p(X_k(\tau)=x_k\big| X_1(0)=\gamma_0, X_1(\tau)=X_2(0)=x_1, \cdots, X_{k-1}(\tau)=X_{k}(0)=x_{k-1})\nonumber\\
&  =p(X_k(\tau)=x_k\big| X_1(0)=\gamma_0; X_2(0)=x_1; \cdots; X_{k}(0)=x_{k-1};  \nonumber\\
&\quad\quad\quad\quad\quad\quad X_1(\tau)=x_1; X_2(\tau)=x_2; \cdots; X_{k-1}(\tau)=x_{k-1})\nonumber\\
&=p(X_k(\tau)=x_k\big|\widetilde{\mathbf{X}}_{k}(0) = (\gamma_0, x_1, x_2, \cdots,x_{k-1}) ; \widetilde{\mathbf{X}}_{k-1}(\tau)=(x_1, x_2, \cdots,x_{k-1}) )\nonumber\\
&=\frac{p(\widetilde{\mathbf{X}}_{k-1}(\tau)=(x_1, x_2, \cdots, x_{k-1}) ;X_k(\tau)=x_k \big|\widetilde{\mathbf{X}}_{k}(0) = (\gamma_0, x_1, x_2, \cdots,x_{k-1}) )}{p(\widetilde{\mathbf{X}}_{k-1}(\tau)=(x_1, x_2, \cdots, x_{k-1}) \big|\widetilde{\mathbf{X}}_{k}(0) = (\gamma_0, x_1, x_2, \cdots,x_{k-1}) )}\nonumber\\
&=\frac{p(\widetilde{\mathbf{X}}_{k}(\tau)=(x_1, x_2, \cdots, x_{k})  \big|\widetilde{\mathbf{X}}_{k}(0) = (\gamma_0, x_1, x_2, \cdots,x_{k-1}) )}{p(\widetilde{\mathbf{X}}_{k-1}(\tau)=(x_1, x_2, \cdots, x_{k-1}) \big|\widetilde{\mathbf{X}}_{k}(0) = (\gamma_0, x_1, x_2, \cdots,x_{k-1}) )}\nonumber\\
&=\frac{p(\widetilde{\mathbf{X}}_{k}(\tau)=(x_1, x_2, \cdots, x_{k})  \big|\widetilde{\mathbf{X}}_{k}(0) = (\gamma_0, x_1, x_2, \cdots,x_{k-1}) )}{p(\widetilde{\mathbf{X}}_{k-1}(\tau)=(x_1, x_2, \cdots, x_{k-1})\big|\widetilde{\mathbf{X}}_{k-1}(0) = (\gamma_0, x_1, x_2, \cdots,x_{k-2} ))}.
\end{align}
The last identity in (\ref{s1_25}) follows from the fact that
 $\widetilde{\mathbf{X}}_{k-1}(\tau)$ is determined only by $\widetilde{\mathbf{X}}_{k-1}(0)$ and independent of $X_{k-1}(0)$, as can be straightforwardly checked   with SDE (\ref{s1_2}).

By Assumption $H1$, the right hand side of (\ref{s1_25}) is well defined. Change the notation   $p$ to $\mathcal{Q}$ (e.g., see (\ref{s1_9})), (\ref{s1_25}) becomes (\ref{s1_22}).

Now, we prove (iii).

Note that
\begin{align}\label{s1_26}
&{\mathcal P}_{\mathcal{A}}(x_1, \tau; x_2, 2\tau; \cdots; x_k, k\tau)\nonumber\\
&={\mathcal P}_{\mathcal{A}}(x_1, \tau)\times{\mathcal P}_{\mathcal{A}}(x_2, 2\tau\big| x_1, \tau)\;{\mathcal P}_{\mathcal{A}}(x_3, 3\tau\big| x_1, \tau; x_2, 2\tau)\times\cdots\nonumber\\
&\quad\quad\times{\mathcal P}_{\mathcal{A}} (x_k, k\tau \big| x_1, \tau; x_2, 2\tau; \cdots; x_{k-1}, (k-1)\tau),
\end{align}
Substitute (\ref{s1_21}) and (\ref{s1_22}) into (\ref{s1_26}), we get (\ref{s1_23}).

In the following, we prove (iv).

Note that
\begin{align}\label{s1_27}
&{\mathcal P}_{\mathcal{A}} (y, t\big| x_1, \tau; x_2, 2\tau; \cdots; x_{k-1}, (k-1)\tau)\nonumber\\
&=\int_{\mathbb{R}^d}\left[{\mathcal P}_{\mathcal{A}} (y, t\big| x_1, \tau; x_2, 2\tau; \cdots; x_{k-1}, (k-1)\tau; x_k, k\tau)\times {\mathcal P}_{\mathcal{A}} (x_k, k\tau\big| x_1, \tau; x_2, 2\tau; \cdots; x_{k-1}, (k-1)\tau)\right]{\rm d}x_k
\nonumber\\
&=\int_{\mathbb{R}^{k\times d}}\left[ {\mathcal P}_{\mathcal{A}} (y_1, t-(k-1)\tau; y_2, t-(k-2)\tau; \cdots; y_{k-1}, t-\tau; y, t\big| x_1, \tau; x_2, 2\tau; \cdots; x_{k-1}, (k-1)\tau; x_k, k\tau)\right.\nonumber\\
&\left.\quad\quad\quad\quad\quad\quad\times {\mathcal P}_{\mathcal{A}} (x_k, k\tau\big| x_1, \tau; x_2, 2\tau; \cdots; x_{k-1}, (k-1)\tau)\right]{\rm d}x_k \prod\limits_{i=1}^{k-1} {\rm d}y_i.
\end{align}

By   Lemma \ref{lemma1}, the joint density for $X(t-(k-1)\tau),X(t-(k-2)\tau),\cdots,X(t)$ under the condition that $X(\tau)=x_1, X(2\tau)=x_2,\cdots,X(k\tau)=x_{k}$ is the same as the joint density for $X_1(t-(k-1)\tau), X_2(t-(k-1)\tau),\cdots,X_k(t-(k-1)\tau)$ defined by SDE (\ref{s1_2}) with continuous condition $X_1(0)=\gamma_0, X_1(\tau)=X_2(0)=x_1, \cdots, X_{k-1}(\tau)=X_k(0)=x_{k-1}, X_k(\tau)=x_k$, i.e., 
\begin{align}\label{s1_28}
& {\mathcal P}_{\mathcal{A}} (y_1, t-(k-1)\tau; y_2, t-(k-2)\tau; \cdots; y_{k-1}, t-\tau; y, t\big| x_1, \tau; x_2, 2\tau; \cdots; x_{k-1}, (k-1)\tau; x_k, k\tau) \nonumber\\
&= p(\widetilde{\mathbf{X}}_k(t-(k-1)\tau)=(y_1, y_2, \cdots,y_{k-1},y) \big| \widetilde{\mathbf{X}}_k(0)=(\gamma_0, x_1, x_2, \cdots, x_{k-1}) ; \widetilde{\mathbf{X}}_k(\tau)=(x_1, x_2, \cdots,x_{k-1},x_k))\nonumber\\
\end{align}
The density in (\ref{s1_28})  exists by   Lemma  \ref{lemma2}. Substitute (\ref{s1_18}) into  (\ref{s1_28}), we get  
\begin{align}\label{s1_29}
& {\mathcal P}_{\mathcal{A}} (y_1, t-(k-1)\tau; y_2, t-(k-2)\tau; \cdots; y_{k-1}, t-\tau; y, t\big| x_1, \tau; x_2, 2\tau; \cdots; x_{k-1}, (k-1)\tau;x_k, k\tau) \nonumber\\
&={\mathcal Q}_k  (y_1, y_2, \cdots, y_{k-1},y; t-(k-1)\tau \big|  \gamma_0, x_1, x_2, \cdots, x_{k-1}; 0)\nonumber\\
&\quad\times \frac{{\mathcal Q}_k (  x_1,x_2, \cdots, x_k; \tau\big| y_1, y_2, \cdots, y_{k-1},y; t-(k-1)\tau)}{{\mathcal Q}_k (  x_1, x_2, \cdots, x_k; \tau \big| \gamma_0, x_1, x_2, \cdots, x_{k-1}; 0)}.
\end{align}
Substitute (\ref{s1_29}) and  (\ref{s1_22}) into (\ref{s1_27}), we get
\begin{align}\label{eq_r4}
&{\mathcal P}_{\mathcal{A}}(y,t \big| x_1, \tau; x_2, 2\tau; \cdots; x_{k-1}, (k-1)\tau)\nonumber\\
&= \displaystyle{\int} _{\mathbb{R}^{k\times d}} {\mathcal Q}_{k} (  x_1,x_2, \cdots, x_{k}; \tau\big| y_1, y_2, \cdots, y_{k-1}, y; t-(k-1)\tau) \nonumber\\
&\quad\quad\times  \dfrac{{\mathcal Q}_k ( y_1, y_2, \cdots, y_{k-1},y; t-(k-1) \tau\big|  \gamma_0, x_1, x_2, \cdots, x_{k-1}; 0)}{{\mathcal Q}_{k-1} (  x_1, x_2, \cdots, x_{k-1}; \tau \big| \gamma_0, x_1, x_2, \cdots, x_{k-2}; 0)}  \;{\rm d}x_k {\rm d}y_1 {\rm d}y_2\cdots{\rm d}y_{k-1}
\end{align}
By using Lemma \ref{lemma_r1}, (\ref{eq_r4}) becomes (\ref{s1_24}).
\end{proof}

We are now ready to present the proof of the main result.
\begin{proof}[\bf{Proof of Theorem \ref{theorem1}:}] 

The statement (i) follows immediately from Lemma \ref{lemma3} (i).

Now we prove (ii).

 For $t= k\tau$ with $k\in \mathbb{N}$ and $k\ge 2$, it follows from Lemma \ref{lemma3}(iii) that  the joint  density ${\mathcal P}_{\mathcal A} (x_1, \tau; x_2, 2\tau; \cdots; x_{k-1}, (k-1)\tau; x, k\tau)$ exists. Then  ${\mathcal P}_{\mathcal A} (x, k\tau)$   exists by the  identity  
\begin{align}\label{s1_30}
{\mathcal P}_{\mathcal A} (x, k\tau) = \int_{{\mathbb{R}}^{(k-1)\times d}} {\mathcal P}_{\mathcal A} (x_1, \tau; x_2,  2\tau; \cdots; x_{k-1}, (k-1)\tau; x, k\tau) \prod\limits_{i=1}^{k-1}{\rm d}x_i\;.
\end{align}
Substitute (\ref{s1_23})  into (\ref{s1_30}), we get (\ref{s1_11}).

In the following, we prove (iii). 

For $t\in ((k-1), k\tau)$ with $k\ge 2$ and $k\in \mathbb{N}$,  it follows from Lemma \ref{lemma3}(ii) and Lemma \ref{lemma3}(iii) that both densities $\mathcal{P}_\mathcal{A} (x,t\big| x_1, \tau; x_2, \tau; \cdots; x_{k-1}, (k-1)\tau)$  and $\mathcal{P}_\mathcal{A}(x_1, \tau; x_2, \tau; \cdots; x_{k-1}, (k-1)\tau)$ exist.  Then $\mathcal{P}_\mathcal{A} (x,t)$ exists by the identity
\begin{align}\label{s1_31}
&\mathcal{P}_\mathcal{A} (x,t)\nonumber\\
&=\int_{\mathbb{R}^{(k-1)\times d}} \left[{\mathcal P}_{\mathcal{A}} (x, t\big| x_1, \tau; x_2, 2\tau; \cdots; x_{k-1}, (k-1)\tau)\times{\mathcal P}_{\mathcal{A}} ( x_1, \tau; x_2, 2\tau; \cdots; x_{k-1}, (k-1)\tau)\right]\prod\limits_{i=1}^{k-1}{\rm d}x_i\nonumber\\
\end{align}
By using (\ref{s1_23}) and (\ref{s1_24}), (\ref{s1_31}) becomes (\ref{s1_12}).
\end{proof}

\section{Examples}
In this section,  the main result is verified by applying it to   some SDDE with known density.\\
Consider SDDE
\begin{align}\label{s4_1}
\begin{cases}
{\rm d}X(t) = X(t-1){\rm d}t + {\rm d}B(t), \quad \text{for} \quad 0<t\le2,\\
X(t)=0, \quad \text{for} \quad -1 \le t\le 0,
\end{cases}
\end{align}
where $X(t)$ is a $\mathbb{R}$-valued process, and $B(t)$ is the standard scalar Brownian motion. Note that (\ref{s4_1}) is corresponding to   (\ref{s1_1}) with $d=1$, $n=1$,  $\tau=1$, $f(x,y)=y$, $g(x,y)=1$ and $\gamma=0$.

It is easy to check that the solution of (\ref{s4_1}) is
\begin{align}\label{s4_2}
X(t)=
\begin{cases}
B(t),& \text{for} \quad t\in[0, 1],\\
(t-1)B(t-1)+ B(t) -  \int_0^{t-1} s{\rm d} B(s),& \text{for} \quad t\in[1, 2],
\end{cases}
\end{align}
and the probability density for the solution $X(t)$ is
\begin{align}\label{s4_3}
\mathcal {P}_\mathcal{A}(x,t)=
\begin{cases}
\dfrac{1}{\sqrt {2\pi t}}\exp\left\{-\dfrac{x^2}{2t}\right\}& \text{for} \quad t\in[0, 1],\\
 \dfrac{1}{\sqrt{2\pi ((t^3+2)/3)}}\exp\left\{-\dfrac{3x^2}{2(t^3+2)}\right\},& \text{for} \quad t\in[1, 2],
\end{cases}
\end{align}

In the following, the corollary of the main theorem  is applied to solve  the density for SDDE (\ref{s1_1}) in the time span $t\in (0,2]$. The obtained density is then compared  with the exact result in (\ref{s4_3}).

By the corollary of the main result, the density of $X(t)$ can be expressed as
\begin{align}\label{s4_4}
\mathcal {P}_\mathcal{A}(x,t) =  \mathcal{Q}_1( x; t\big| 0;0), \quad\text{for}\quad  t\in (0,1],
\end{align}
and 
\begin{align}\label{s4_5}
\mathcal {P}_\mathcal{A}(x,t)=\displaystyle\int_{\mathbb{R}^{2}}  \mathcal{Q}_1( y_1; 1\big| x_1; t-1)\times\mathcal{Q}_2( x_1, x; t-1\big| 0,y_1;0)  {\rm d}x_1\,{\rm d}y_1, \quad\text{for}\quad t\in [1,2],
\end{align}
where $\mathcal{Q}_1(x;t\big| y;s)$   satisfies the PDE
\begin{align}\label{s4_6}
\begin{cases}
\dfrac{\partial}{\partial t}\mathcal{Q}_{1}(x;t\big| y;s)=  \dfrac{1}{2}\dfrac{\partial^2}{\partial x ^2}  \mathcal{Q}_{1}(x;t\big| y;s),\quad \text{for}\quad t>s,\\
 \lim\limits_{t\to s}\mathcal{Q}_1( x;t\big| y;s)= \delta(x-y),
\end{cases}
\end{align}
and $\mathcal{Q}_2( x_1, x_2; t \big|  y_1, y_2; s)$ with $0\le s < t\le \tau$ satisfies the PDE
\begin{align} \label{s4_7}
\begin{cases}
&\dfrac{\partial}{\partial t}\mathcal{Q}_{2} ( x_1, x_2; t \big|  y_1, y_2; s)= -x_1\dfrac{\partial}{\partial x_2}  \mathcal{Q}_{2} ( x_1, x_2; t \big|  y_1, y_2; s ) + \dfrac{1}{2}\dfrac{\partial^2}{\partial x_1^2}  \mathcal{Q}_{2}( x_1, x_2; t \big|  y_1, y_2; s )  \\
&\quad\quad\quad  + \dfrac{1}{2}\dfrac{\partial^2}{\partial x_{2}^2} \mathcal{Q}_{2}( x_1, x_2; t \big|  y_1, y_2; s ) , \quad\text{for}\quad t>s, \\ \\
&\lim\limits_{t\to s}\mathcal{Q}_2( x_1, x_2; t \big|  y_1, y_2; s) = \delta(x_1-y_1) \delta(x_2- y_2).
\end{cases}
\end{align}
It is straightforward to check that the solutions to PDEs (\ref{s4_6}) and (\ref{s4_7}) can be respectively expressed as
\begin{align}\label{s4_8}
\mathcal{Q}_1( x;t\big| y;s)=\dfrac{1}{\sqrt {2\pi (t-s)}}\exp\left\{-\dfrac{(x-y)^2}{2(t-s)}\right\} 
\end{align}
and
\begin{align}\label{s4_9}
  & \mathcal{Q}_{2}( x_1,x_2;t \big| y_1,y_2;s)   = \dfrac{1}{2\pi(t-s)\sqrt{\dfrac{(t-s)^2+12}{12}}}\nonumber\\
 &\quad\quad\times \exp\left\{-\dfrac{2(t-s)^2+6}{(t-s)^2+12}\left[\dfrac{(y_1-x_1)^2}{t-s} 
   -  \dfrac{3(x_1-y_1)(x_2-y_2 -y_1(t-s))}{(t-s)^2+3} + \dfrac{3(x_2- y_2 - y_1(t-s))^2}{  (t-s)^3+3(t-s) }\right]\right\}. 
\end{align}
It follows (\ref{s4_8}) that
\begin{align}\label{s4_10}
\mathcal{Q}_1(x;t\big| 0;0)=\dfrac{1}{\sqrt {2\pi  t }}\exp\left\{-\dfrac{ x ^2}{2t }\right\}
\end{align}
and
\begin{align}\label{s4_12}
\mathcal{Q}_1(y_1;1\big| x_1;t-1)=\dfrac{1}{\sqrt {2\pi (2-t) }}\exp\left\{-\dfrac{ (y_1-x_1) ^2}{2(2-t) }\right\}.
\end{align}
It follows from (\ref{s4_9}) that
\begin{align}\label{s4_11}
   & \mathcal{Q}_{2}(x_1,x;t-1 \big|0,y_1;0 ) \notag\\
   =& \frac{1}{2\pi(t-1)\sqrt{\frac{(t-1)^2+12}{12}}} \exp\left\{-\frac{2(t-1)^2+6}{(t-1)^2+12}\left[\frac{x_1^2}{t-1}
   - \frac{3x_1(x-y_1)}{(t-1)^2+3}
   + \frac{3(x-y_1)^2}{(t-1)((t-1)^2+3)}\right]\right\}.
\end{align}
By using (\ref{s4_11}) and (\ref{s4_12}), we get 
\begin{align}\label{s4_13}
 & \int_{\mathbb{R}^2}\mathcal{Q}_{1}(y_1;1 \big|x_1;t-1)\mathcal{Q}_{2}( x_1,x, t-1 \big|0, y_1;0 ) dy_1dx_1\notag\\
  &=\int_{\mathbb{R}^2}\frac{1}{\sqrt{2\pi(2-t)}}\exp\left\{-\frac{(y_1-x_1)^2}{2(2-t)}\right\}\notag\\
  &\quad\times \frac{1}{2\pi(t-1)\sqrt{\frac{(t-1)^2+12}{12}}} \exp\left\{-\frac{2(t-1)^2+6}{(t-1)^2+12}\left(\frac{x_1^2}{t-1}
   - \frac{3x_1(x-y_1)}{(t-1)^2+3}
   + \frac{3(x-y_1)^2}{(t-1)((t-1)^2+3)}\right)\right\}dy_1dx_1.\notag\\
   &=\int_{\mathbb{R}^2}\frac{1}{2\pi(t-1)\sqrt{\frac{(t-1)^2+12}{12}}\sqrt{2\pi(2-t)}}\notag\\
   &\quad\times\exp\left\{-\left(\frac{1}{2(2-t)}+\frac{6}{12(t-1)+(t-1)^3}\right)y_1^2
   +\left(\frac{-6x_1}{12 +(t-1)^2}+\frac{x_1}{2-t}+\frac{12x}{12(t-1)+(t-1)^3}\right)y_1\right\}\notag\\
   &\quad\times\exp\left\{\frac{-6x^2}{(t-1)^3+12(t-1)}+\frac{6xx_1}{(t-1)^2+12}
   -\frac{x_1^2}{2(2-t)}-\frac{(2(t-1)^2+6)x_1^2}{ (t-1)^3+12(t-1)}\right\}dx_1dy_1\notag\\
  &=\int_{\mathbb{R} }\frac{\sqrt{3}}{\pi\sqrt{(t-1)^4+12(t-1)}}\times \exp\left\{\frac{-6x^2}{(t-1)^3+12 }\right\}\notag\\
  &\quad\times \exp\left\{-\left(\frac{2(t-1)^3+6(t-1)^2+6t}{(t-1)^4+12(t-1) }\right)x_1^2
  +\left(\frac{6(t+1)x}{(t-1)^3+12 }\right)x_1\right\}dx_1\notag\\
  &= \frac{1}{\sqrt{2\pi ((t^3+2)/3)}}\exp\left\{-\frac{3x^2}{2(t^3+2)}\right\}.
\end{align}
Substitute (\ref{s4_10}) into (\ref{s4_4}), and (\ref{s4_13}) into (\ref{s4_5})  respectively,  we get  the desired result (\ref{s4_3}).

\section*{Acknowledgment}
This work was supported by the National Natural Science Foundation of China
grants 11531006.

%\bibliographystyle{unsrt}
%\bibliography{E:/XuSun/Paper_undergoing/manuscript_stokes_n}

\end{document}